\documentclass[a4paper]{amsart}

\usepackage[english]{babel}
\usepackage{amsmath}
\usepackage{amsfonts}
\usepackage{amsthm}
\usepackage{dsfont}

\theoremstyle{plain}
\newtheorem{theorem}{Theorem}[section]

\newtheorem{corollary}[theorem]{Corollary}
\theoremstyle{definition}

\theoremstyle{remark}
\newtheorem{remark}[theorem]{Remark}

\numberwithin{equation}{section}

\newcommand{\eps}{\ensuremath{\varepsilon}}
\newcommand{\esp}{\ensuremath{\mathbb{E}}}
\newcommand{\ind}{\ensuremath{\mathds{1}}}
\newcommand{\N}{\ensuremath{\mathbb{N}}}
\newcommand{\R}{\ensuremath{\mathbb{R}}}

\begin{document}

\title[Weak convergence towards two independent Gaussian processes]{Weak convergence towards two independent Gaussian processes from a unique Poisson process}
\author[X. Bardina and D. Bascompte]{Xavier Bardina$^{*}$ and David Bascompte}
\address{Departament de Matem\`atiques\\
Universitat Aut\`onoma de Barcelona\\ 08193 Bellaterra (Spain)}
\email{bardina@mat.uab.cat, bascompte@mat.uab.cat}
\thanks{$^{*}$ Corresponding author\\ The authors are partially supported by MEC-Feder Grant MTM2006-06427}
\begin{abstract}
We consider two independent Gaussian processes that admit a
representation in terms of a stochastic integral of a deterministic
kernel with respect to a standard Wiener process. In this paper we
construct two families of processes, from a unique Poisson process,
the finite dimensional distributions of which converge in law
towards the finite dimensional distributions of the two independent
Gaussian processes.

As an application of this result we obtain families of processes
that converge in law towards fractional Brownian motion and
sub-fractional Brownian motion.
\end{abstract} \maketitle


\section{Introduction and preliminaries}

Let $f(t,\cdot)$ and $g(t,\cdot)$ be functions of
$\mathrm{L^2(\R^+)}$ for all $t\in[0,T]$, \mbox{$T>0$} and consider
the processes given by
\begin{equation}\label{nova1}Y^f=\left\{\int_0^\infty f(t,s)\mathrm{d}W_s,
t\in[0,T]\right\}\end{equation} and
\begin{equation}\label{nova2}\tilde{Y}^g=\left\{\int_0^\infty
g(t,s)\mathrm{d}\tilde{W}_s,t\in[0,T]\right\}\end{equation} where
$W=\{W_s,s\geq0\}$ and $\tilde{W}=\{\tilde{W}_s,s\geq0\}$ are
independent standard Brownian motions.

The aim of this paper is to construct two families of processes,
from a unique Poisson process, that converge, in the sense of the
finite dimensional distributions, to the processes $Y^f$ and $\tilde
Y^g$. We will use this result later in order to prove weak
convergence results towards different kinds of processes such as
fractional Brownian motion and sub-fractional Brownian motion.

It is well known the result by Stroock (see \cite{Stroock-1982})
where it is shown that the family of processes
\[
\left\{x_\eps(t)=\frac1\eps\int_0^t(-1)^{N_\frac{s}{\eps^2}}\,\mbox{d}s,\quad
t\in[0,T]\right\},
\]
defined from the kernels
$\theta_\eps=\frac1\eps(-1)^{N_\frac{s}{\eps^2}}$, converges in law
in $\mathcal{C}([0,T])$ to a standard Brownian motion, where
$N=\{N_s,s\geq 0\}$ is a standard Poisson process. This kind of
processes were introduced by Kac in \cite{Kac-1974} in order to
write the solution of telegrapher's equation in terms of Poisson
process.

On the other hand, Delgado and Jolis (see \cite{Delgado-2000})
extend this result to processes represented by a stochastic
integral, with respect to a standard Wiener process, of a
deterministic kernel that satisfies some regularity conditions.

A generalization of Stroock's result can be found in
\cite{Bardina-2001}, where it is proved that the family
\begin{equation}\label{e:complex}
\left\{x_\eps^\theta(t)=\frac{2}{\eps}\int_0^te^{i\theta
N_{\frac{2s}{\eps^2}}}\,\mbox{d}s,\quad t\in[0,T]\right\}
\end{equation}
converges in law in $\mathcal{C}([0,T])$ to a complex Brownian
motion, for $\theta\in(0,\pi)\cup(\pi,2\pi)$. Particularly, the real
part and the imaginary part of (\ref{e:complex}) tend to independent
standard Brownian motions.

In this paper, given \mbox{$\{N_s,s\geq0\}$} a standard Poisson
process and $\theta\in(0,\pi)\cup(\pi,2\pi)$, we consider the
following families of approximating processes
\begin{equation}\label{e:cos_f}
Y_{\eps}^f=\left\{\frac{2}{\eps}\int_0^\infty f(t,s)\cos\left(\theta
N_{\frac{2s}{\eps^2}}\right)\mbox{d}s,\quad t\in[0,T]\right\}
\end{equation}
and
\begin{equation}\label{e:sin_f}
\tilde{Y}_{\eps}^g=\left\{\frac{2}{\eps}\int_0^\infty
g(t,s)\sin\left(\theta N_{\frac{2s}{\eps^2}}\right)\mbox{d}s,\quad
t\in[0,T]\right\}.
\end{equation}

The main result of this paper is the proof that the finite
dimensional distributions of the processes $Y_{\eps}^f$ and
$\tilde{Y}_{\eps}^g$ converge in law to the finite dimensional
distributions of the processes $Y^f$ and $\tilde{Y}^g$ given by
(\ref{nova1}) and (\ref{nova2}), respectively.

It is important to note that the processes $Y_{\eps}^f$ and
$\tilde{Y}_{\eps}^g$ are both functionally dependent. Nevertheless,
integrating and taking limits, we obtain two independent processes.

As an application of this result it can be obtained approximations
for different examples of centered Gaussian processes, among others,
fractional Brownian motion and sub-fractional Brownian motion.

Recall that \emph{fractional Brownian motion} (fBm for short)
$B^H=\{B^H(t),t\geq0\}$ is a centered Gaussian process with
covariance function
\begin{equation}\label{e:frac_cov}
\mbox{Cov}(B^H_t,B^H_s)=\frac{1}{2}\left(s^{H}+t^{H}-|s-t|^{H}\right)
\end{equation}
where $H\in (0,2)$. Usually fBm is defined with Hurst parameter
belonging to the interval $(0,1)$ with the corresponding covariance,
but in order to compare it with sub-fBm we use the stated
representation with $H\in(0,2)$.

On the other hand, \emph{Sub-fractional Brownian motion} (sub-fBm
for brevity) $S^H=\{S^H(t),t\geq0\}$ is a centered Gaussian process
with covariance function
\begin{equation}\label{e:subfrac_cov}
\mbox{Cov}(S^H_t,S^H_s)=s^{H}+t^{H}-\frac{1}{2}\left[(s+t)^{H}+|s-t|^{H}\right]
\end{equation}
where $H\in (0,2)$.

This process was introduced by Bojdecki {\it et al.} in 2004 (see
\cite{Boj1}) as an intermediate process between standard Brownian
motion and fractional Brownian motion.  Note that both fBm and
sub-fBm are standard Brownian motions for $H=1$.

For $H\neq1$, sub-fBm preserves some of the main properties of fBm, such as long-range dependence, but its increments are not stationary;
they are more weakly correlated on non-overlapping intervals than fBm ones, and their covariance decays polynomially at a higher rate as the
distance between the intervals tends to infinity. For a more detailed discussion of sub-fBm and its properties we refer the reader to \cite{Boj1}.
Some properties of this process have also been studied in \cite{Tudor1} and \cite{Tudor2}. On the other hand there is an extension of sub-fBm
in \cite{Boj2}.

In  \cite{Tudor3} (see Theorem \ref{t:descomposicio} below) the
authors obtain a decomposition of the sub-fBm in terms of fBm and
another process with absolutely continuous trajectories,
$X^H=\{X_t^H,t\geq0\}$, which is defined by Lei and Nualart in
\cite{Lei-2008} by
\begin{equation}\label{e:david}
X_t^H=\int_0^{\infty}(1-e^{-r t})r^{-\frac{1+H}{2}}\,\mathrm{d}W_r
\end{equation}
where $W$ is a standard Brownian motion. Lei and Nualart introduce this process in order to obtain a decomposition of bifractional Brownian motion
 into the sum of a transformation of $X_t^H$ and a fBm.

The decomposition is different for $H\in(0,1)$ and $H\in(1,2)$. In
the first case, sub-fBm is obtained as a sum of two independent
processes, a fBm and the process defined by (\ref{e:david}), while
for $H\in(1,2)$ is fBm that is decomposed into the sum of the
process (\ref{e:david}) and a sub-fBm, being these independents.

The paper is organized as follows.  In Section 2 we will prove the
general result of weak convergence, in the sense of the finite
dimensional distributions, towards integrals of functions of
$L^2(\R^{+})$ with respect to two independent standard Brownian
motions. This theorem permits us to obtain, in Section 3, results of
convergence in law, in the space $\mathcal C([0,T])$, towards fBm,
the process defined in (\ref{e:david}) and, finally, sub-fBm with
parameter $H\in(0,1)$ using the decomposition of this process as a
sum of two independent processes.

Positive constants, denoted by $C$, with possible subscripts indicating appropriate parameters, may vary from line to line.


\section{General convergence result}

In this section we prove the main result of weak convergence in the
sense of the finite dimensional distributions. We will use this
result later in order to prove weak convergence results towards
fractional Brownian motion and sub-fractional Brownian motion.


\begin{theorem}\label{t:conv}
Let $f(t,\cdot)$ and $g(t,\cdot)$ be functions of
$\mathrm{L^2(\R^+)}$ for all $t\in[0,T]$, \mbox{$T>0$}, let
\mbox{$\{N_s,s\geq0\}$} be a standard Poisson process and
$\theta\in(0,\pi)\cup(\pi,2\pi)$. Define the processes $Y^f$ and
$\tilde Y^g$, which are given by $Y^f=\{\int_0^\infty
f(t,s)\mathrm{d}W_s, t\in[0,T]\}$ and $\tilde{Y}^g=\{\int_0^\infty
g(t,s)\mathrm{d}\tilde{W}_s,t\in[0,T]\}$ and where
$W=\{W_s,s\geq0\}$ and $\tilde{W}=\{\tilde{W}_s,s\geq0\}$ are
independent standard Brownian motions. We also define the following
processes
\begin{equation}\label{e:cos_f}
Y_{\eps}^f=\left\{\frac{2}{\eps}\int_0^\infty f(t,s)\cos\left(\theta
N_{\frac{2s}{\eps^2}}\right)\mbox{d}s,\quad t\in[0,T]\right\}
\end{equation}
and
\begin{equation}\label{e:sin_f}
\tilde{Y}_{\eps}^g=\left\{\frac{2}{\eps}\int_0^\infty
g(t,s)\sin\left(\theta N_{\frac{2s}{\eps^2}}\right)\mbox{d}s,\quad
t\in[0,T]\right\}.
\end{equation}

Then, the finite dimensional distributions of the processes
$\{Y_\eps^f\}$ and $\{\tilde{Y}_\eps^g\}$ converge in law to the
finite dimensional distributions of the processes $Y^f$ and
$\tilde{Y}^g$.
\end{theorem}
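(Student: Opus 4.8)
The plan is to pass from the multidimensional statement to a single central limit statement by the Cram\'er--Wold device, to settle that statement first for step kernels, and then to reach arbitrary $L^2(\R^+)$ kernels by density together with a uniform second moment bound.

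Fix $t_1,\dots,t_n\in[0,T]$ and reals $a_1,\dots,a_n,b_1,\dots,b_n$, and set $h=\sum_{j}a_jf(t_j,\cdot)$ and $k=\sum_j b_j g(t_j,\cdot)$, both in $L^2(\R^+)$. The corresponding linear combination of the coordinates of the approximating vector is
\begin{equation*}
S_\eps=\frac{2}{\eps}\int_0^\infty\Big(h(s)\cos(\theta N_{2s/\eps^2})+k(s)\sin(\theta N_{2s/\eps^2})\Big)\,\mathrm{d}s .
\end{equation*}
Since $W$ and $\tilde W$ are independent, the target limit vector is centered Gaussian with $\mathrm{Cov}(Y^f(t_i),\tilde Y^g(t_j))=0$, so by Cram\'er--Wold it suffices to prove that $S_\eps$ converges in law to a centered Gaussian with variance $\|h\|_{L^2(\R^+)}^2+\|k\|_{L^2(\R^+)}^2$. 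In particular the cross term between the $\cos$ and $\sin$ contributions must disappear in the limit, and recovering this cancellation is the heart of the matter.

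First I would treat the case where $h$ and $k$ are step functions. Each elementary block $\frac{2}{\eps}\int_a^b\cos(\theta N_{2s/\eps^2})\,\mathrm{d}s$ is the real part of the increment $x_\eps^\theta(b)-x_\eps^\theta(a)$ of the process in \eqref{e:complex}, and the analogous $\sin$ block is its imaginary part. The convergence in law of $x_\eps^\theta$ to a complex Brownian motion, recalled in the introduction from \cite{Bardina-2001}, yields joint convergence of these increments to increments of two independent standard Brownian motions; hence $S_\eps$ converges to the announced Gaussian, the independence of the real and imaginary limits producing the vanishing cross term. To remove the restriction to step functions I would establish a bound $\esp[S_\eps^2]\le C(\|h\|_{L^2(\R^+)}^2+\|k\|_{L^2(\R^+)}^2)$ uniform in $\eps$; approximating $h,k$ in $L^2(\R^+)$ by step functions $h_m,k_m$ and applying this bound to the differences, a standard triangle inequality in a metric for convergence in law transfers the conclusion to arbitrary $L^2(\R^+)$ kernels.

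The uniform bound and the cancellation both rest on the same covariance computation, which I expect to be the main obstacle. Using the independent increments of $N$ and $\esp[e^{i\theta N_u}]=e^{u(e^{i\theta}-1)}$, one finds for $s\le r$ that $\esp[e^{i\theta(N_{2s/\eps^2}-N_{2r/\eps^2})}]=\exp\big(\tfrac{2(r-s)}{\eps^2}(e^{-i\theta}-1)\big)$, while the equal-sign exponentials $\esp[e^{\pm i\theta(N_{2s/\eps^2}+N_{2r/\eps^2})}]$ carry a factor decaying like $e^{(\cos\theta-1)\,\cdot/\eps^2}$ and are negligible. Writing all covariances through this surviving kernel and using that for $\theta\in(0,\pi)\cup(\pi,2\pi)$
\begin{equation*}
\int_{-\infty}^{\infty}\frac{2}{\eps^2}\,\mathrm{Re}\,\exp\!\Big(\tfrac{2|x|}{\eps^2}(e^{-i\theta}-1)\Big)\,\mathrm{d}x=1,
\end{equation*}
the real part behaves as an approximate identity and produces exactly $\|h\|_{L^2(\R^+)}^2$ from the $\cos\cos$ part and $\|k\|_{L^2(\R^+)}^2$ from the $\sin\sin$ part. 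The cross term is governed instead by the imaginary part of the same exponential, which is an odd kernel of total integral zero and uniformly bounded $L^1$ norm; hence it integrates any bounded continuous profile to zero in the limit, forcing the $\cos$--$\sin$ covariance to vanish. Controlling these approximate-identity limits, and the negligibility of the equal-sign terms, for general $L^2(\R^+)$ kernels is where the real work lies.
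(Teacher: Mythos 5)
Your proposal is correct and, at bottom, follows the same strategy as the paper, but it makes explicit a reduction the paper outsources and handles the independence by a different mechanism. The paper simply invokes a slight modification of Theorem 1 of \cite{Delgado-2000} to reduce the f.d.d.\ convergence to the uniform second-moment bound \eqref{e:m2}; your Cram\'er--Wold $+$ step-kernel $+$ $L^2$-density scheme is precisely a reconstruction of that reduction, and your key computations check out: $|1-e^{i\theta}|^2=2(1-\cos\theta)$ indeed makes your kernel an approximate identity of total mass $1$, matching the paper's bound $\esp[Z_\eps^f\bar Z_\eps^f]\le\frac{4}{1-\cos\theta}\lVert f\rVert_2^2$, and the negligibility of the equal-sign terms is the paper's estimate of $A^\eps$ (note the relevant extra decay rate is $1-\cos 2\theta$, not $1-\cos\theta$; this is exactly where $\theta\neq\pi$ is used, and it is worth flagging). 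The genuine divergence is in the independence of the two limits. The paper establishes a fourth-moment bound \eqref{e:m4} to get uniform integrability of $Y_\eps^f\tilde Y_\eps^g$, then computes $\lim_\eps\esp[Y_\eps^f\tilde Y_\eps^g]=0$ by splitting into $I_{1,1},I_{1,2},I_{2,1},I_{2,2}$ and showing the two ``difference'' terms have the same finite limit. Your route absorbs the cross term into the one-dimensional CLT for $S_\eps$: for step kernels the vanishing comes from the independence of the real and imaginary parts of the limiting complex Brownian motion in \eqref{e:complex}, and for general kernels it is inherited by density, with your oddness observation (the sign-corrected imaginary kernel is odd with zero total integral) being exactly the paper's cancellation $I_{1,2}^\eps-I_{2,2}^\eps\to0$ repackaged. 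This buys you Theorem \ref{t:conv} without any fourth-moment computation (the paper needs \eqref{e:m4} again for tightness in Section 3, so nothing is lost globally), at the cost of having to justify carefully the signed approximate-identity limits you yourself identify as the remaining work; those justifications do go through, since $x\mapsto\int_0^\infty h(s)k(s+x)\,\mathrm{d}s$ is bounded and continuous by $L^2$-continuity of translations.
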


\begin{proof}

Taking into account that the proof is valid for any fixed
$t\in[0,T]$, by abuse of notation we will write $f(s)$ instead of
$f(t,s)$. Slightly modifying the proof of Theorem 1 in
\cite{Delgado-2000}, in order to prove the weak convergence, in the
sense of the finite dimensional distributions, it suffices to show
that

\begin{equation}\label{e:m2}
\esp\left[(Y_{\eps}^f)^2\right]\leq C\left(\int_0^\infty
f^2(s)\,\mbox{d}s\right)\!,\quad
\esp\left[(\tilde{Y}_{\eps}^g)^2\right]\leq C\left(\int_0^\infty
g^2(s)\,\mbox{d}s\right).
\end{equation}
Observe that defining
\[Z_\eps^f=Y_\eps^f+i\tilde{Y}_\eps^f=\frac{2}{\eps}\int_0^\infty f(s)e^{i\theta N_{\frac{2s}{\eps^2}}}\mbox{d}s\]
we have
$\esp[Z_\eps^f\bar{Z}_\eps^f]=\esp[(Y_\eps^f)^2+(\tilde{Y}_\eps^f)^2]$.
Therefore if we prove $\esp[Z_\eps^f\bar{Z}_\eps^f]\leq C\lVert
f\rVert_2^2$, where $\lVert\cdot\rVert_2$ is the $L^2(\R^+)$ norm,
the stated convergence follows.

\begin{align*}
\esp[Z_\eps^f\bar{Z}_\eps^f]&=\esp\left[\frac{2}{\eps}\int_0^\infty f(s)e^{i\theta N_{\frac{2s}{\eps^2}}}\mbox{d}s\frac{2}{\eps}\int_0^\infty f(r)e^{-i\theta N_{\frac{2r}{\eps^2}}}\mbox{d}r\right] \\
                       &=\frac{4}{\eps^2}\esp\left[\int_0^\infty \int_0^\infty f(s)f(r)e^{i\theta\left(N_{\frac{2s}{\eps^2}}-N_{\frac{2r}{\eps^2}}\right)}\mbox{d}s\,\mbox{d}r\right]\\
                       &=\frac{4}{\eps^2}\int_0^\infty \int_0^\infty \ind_{\{r\leq s\}}f(s)f(r)\esp\left[e^{i\theta\left(N_{\frac{2s}{\eps^2}}-N_{\frac{2r}{\eps^2}}\right)}\right]\mbox{d}r\,\mbox{d}s\\
                       &\quad+\frac{4}{\eps^2}\int_0^\infty \int_0^\infty \ind_{\{s\leq r\}}f(s)f(r)\esp\left[e^{-i\theta\left(N_{\frac{2r}{\eps^2}}-N_{\frac{2s}{\eps^2}}\right)}\right]\mbox{d}s\,\mbox{d}r.
\end{align*}
Since $\esp[e^{i\theta X}]\!=\!e^{-\lambda(1-e^{i\theta})}$ and
$\esp[e^{-i\theta X}]\!=\!e^{-\lambda(1-e^{-i\theta})}$, being $X$ a
Poisson random variable of parameter $\lambda$, we obtain

\begin{align*}
\esp[Z_\eps^f\bar{Z}_\eps^f]&=\frac{4}{\eps^2}\int_0^\infty \int_0^\infty \ind_{\{r\leq s\}}f(s)f(r)e^{-2\frac{s-r}{\eps^2}(1-e^{i\theta})}\mbox{d}r\,\mbox{d}s\\
                        &\quad+\frac{4}{\eps^2}\int_0^\infty \int_0^\infty \ind_{\{s\leq r\}}f(s)f(r)e^{-2\frac{r-s}{\eps^2}(1-e^{-i\theta})}\mbox{d}r\,\mbox{d}s\\
                        &\leq\frac{4}{\eps^2}\int_0^\infty \int_0^\infty \ind_{\{r\leq s\}}\lvert f(s)f(r)\rvert\,e^{-2\frac{s-r}{\eps^2}(1-\cos\theta)}\mbox{d}r\,\mbox{d}s\\
                        &\quad+\frac{4}{\eps^2}\int_0^\infty \int_0^\infty \ind_{\{s\leq r\}}\lvert f(s)f(r)\rvert\,e^{-2\frac{r-s}{\eps^2}(1-\cos\theta)}\mbox{d}r\,\mbox{d}s.
\end{align*}
Using the inequality $\lvert
f(s)f(r)\rvert\leq\frac{1}{2}\left(f^2(s)+f^2(r)\right)$ and noting
that, by means of a change of variables, the last two integrals are
the same we have that
\begin{align*}
\esp[Z_\eps^f\bar{Z}_\eps^f]&\leq\frac{4}{\eps^2}\int_0^\infty \int_0^\infty \ind_{\{s\leq r\}}\left(f^2(s)+f^2(r)\right)e^{-2\frac{r-s}{\eps^2}(1-\cos\theta)}\mbox{d}r\,\mbox{d}s\\
                        &=\frac{4}{\eps^2}\!\left(\!\int_0^\infty \!\!\!f^2(s)\!\int_s^\infty \!\!\!e^{-2\frac{r-s}{\eps^2}(1-\cos\theta)}\mbox{d}r\,\mbox{d}s+\!\!\int_0^\infty \!\!\!f^2(r)\!\int_0^r\!\!\!e^{-2\frac{r-s}{\eps^2}(1-\cos\theta)}\mbox{d}s\,\mbox{d}r\!\right)\\
                        &=2\left(\int_0^\infty f^2(s)\left(\frac{1}{1-\cos\theta}\right)\mbox{d}s+\int_0^\infty f^2(r)\left(\frac{1-e^{-2\frac{r}{\eps^2}(1-\cos\theta)}}{1-\cos\theta}\right)\mbox{d}r\right)\\
                        &\leq\frac{4}{1-\cos\theta}\int_0^\infty f^2(s)\,\mathrm{d}s.
\end{align*}

Then, the convergence of the finite dimensional distributions has
been proved and it remains to prove the independence of the limit
processes. We begin by proving that the family
$\{Y_\eps^f\tilde{Y}_\eps^g\}_{\eps>0}$ is uniformly integrable.
Indeed, we will prove that
$\sup_{\eps>0}\esp\left[(Y_\eps^f\tilde{Y}_\eps^g)^2\right]<\infty$.
Using H\"older's inequality we have
\[
\sup_{\eps>0}\esp\left[(Y_\eps^f\tilde{Y}_\eps^g)^2\right]\leq\sup_{\eps>0}\left(\esp[(Y_\eps^f)^4]\right)^\frac12\left(\esp[(\tilde{Y}_\eps^g)^4]\right)^\frac12.
\]
In order to prove that the last expression is finite, we will show
that
\begin{equation}\label{e:m4}
\esp\left[(Y_{\eps}^f)^4\right]\leq C\left(\int_0^\infty
f^2(s)\,\mbox{d}s\right)^2\!,\quad
\esp\left[(\tilde{Y}_{\eps}^g)^4\right]\leq C\left(\int_0^\infty
g^2(s)\,\mbox{d}s\right)^2.
\end{equation}
Being $Z_\eps^f$ like before, we can prove (\ref{e:m4}) showing that
$\esp[(Z_\eps^f\bar{Z}_\eps^f)^2]\leq C\lVert f\rVert_2^4$.

\begin{align*}
\esp[(Z_\eps^f\bar{Z}_\eps^f)^2]&=\frac{16}{\eps^4}\esp\left[\int_{[0,\infty)^4}f(s_1)\dotsm f(s_4)e^{i\theta\left(N_{\frac{2s_1}{\eps^2}}+N_{\frac{2s_2}{\eps^2}}-N_{\frac{2s_3}{\eps^2}}-N_{\frac{2s_4}{\eps^2}}\right)}\mbox{d}s_1\dotsm\mbox{d}s_4\right]\\
                        &=\frac{64}{\eps^4}\int_{[0,\infty)^4}\ind_{\{s_1\leq\dotsb\leq s_4\}}f(s_1)\dotsm f(s_4)\esp\left[E_1+\dotsb+E_6\right]\mbox{d}s_1\dotsm\mbox{d}s_4
\end{align*}
where
\begin{align*}
E_1\!&=\!e^{i\theta\Big(N_{\frac{2s_1}{\eps^2}}+N_{\frac{2s_2}{\eps^2}}-N_{\frac{2s_3}{\eps^2}}-N_{\frac{2s_4}{\eps^2}}\Big)}\!\!=\!e^{-i\theta\Big(N_{\frac{2s_4}{\eps^2}}-N_{\frac{2s_3}{\eps^2}}+2\big(N_{\frac{2s_3}{\eps^2}}-N_{\frac{2s_2}{\eps^2}}\big)+N_{\frac{2s_2}{\eps^2}}-N_{\frac{2s_1}{\eps^2}}\Big)},\\
E_2\!&=e^{-i\theta\left(N_{\frac{2s_4}{\eps^2}}-N_{\frac{2s_3}{\eps^2}}+N_{\frac{2s_2}{\eps^2}}-N_{\frac{2s_1}{\eps^2}}\right)},
E_3\!=e^{i\theta\left(N_{\frac{2s_4}{\eps^2}}-N_{\frac{2s_3}{\eps^2}}-\big(N_{\frac{2s_2}{\eps^2}}-N_{\frac{2s_1}{\eps^2}}\big)\right)},
\end{align*}
$E_4=\overline{E_3}$, $E_5=\overline{E}_2$, $E_6=\overline{E}_1$. To
obtain the last expression note that we can arrange
$s_1,s_2,s_3,s_4$ in 24 different ways and due to the symmetry
between $s_1$ and $s_2$ and between $s_3$ and $s_4$ we have 6
possible different situations, $E_1,\dots, E_6$, each one repeated 4
times. By means of the properties of Poisson process we have
\[\lVert\esp[E_1]\rVert,\lVert\esp[E_2]\rVert,\lVert\esp[E_3]\rVert\leq e^{-2\frac{s_4-s_3}{\eps^2}(1-\cos\theta)}e^{-2\frac{s_2-s_1}{\eps^2}(1-\cos\theta)}\]
and we can conclude
\begin{align*}
\esp[(Z_\eps^f\bar{Z}_\eps^f)^2]&\leq\frac{384}{\eps^4}\int_{[0,\infty)^4}\ind_{\{s_1\leq\dotsb\leq s_4\}}\lvert f(s_1)\dotsm f(s_4)\rvert \\
    &\qquad\qquad\qquad e^{-2\frac{s_4-s_3}{\eps^2}(1-\cos\theta)}e^{-2\frac{s_2-s_1}{\eps^2}(1-\cos\theta)}\mbox{d}s_1\dotsm\mbox{d}s_4\\
    &\leq\frac{384}{2\eps^2}\left(\int_{[0,\infty)^2}\ind_{\{s_1\leq s_2\}}\lvert f(s_1)f(s_2)\rvert e^{-2\frac{s_2-s_1}{\eps^2}(1-\cos\theta)}\mbox{d}s_1\mbox{d}s_2\right)^2\\
    &\leq3\left(\frac{4}{1-\cos\theta}\int_0^\infty f^2(s)\mbox{d}s\right)^2.
\end{align*}

Then the family $\{Y_\eps^f\tilde{Y}_\eps^g\}_{\eps>0}$ is uniformly
integrable and consequently
\[
\esp\left[\lim_{\eps\rightarrow
0}Y_\eps^f(t)\tilde{Y}_\eps^g(s)\right]=\lim_{\eps\rightarrow
0}\esp[Y_\eps^f(t)\tilde{Y}_\eps^g(s)].
\]
Since $Y^f$ and $\tilde{Y}^g$ are centered Gaussian processes, in
order to prove their independence it suffices to show that the last
limit converges to zero as $\eps$ tends to zero. To deal with this
limit, we observe that
\begin{align*}
\esp[Y_\eps^f\tilde{Y}_\eps^g]&=\frac4{\eps^2}\int_0^\infty\int_0^\infty f(s)g(r)\esp\left[\cos(\theta N_{\frac{2s}{\eps^2}})\sin(\theta N_{\frac{2r}{\eps^2}})\right]\mathrm{d}s\mathrm{d}r\\
    &=\frac4{\eps^2}\int_0^\infty\int_0^\infty f(s)g(r)\ind_{\{s\leq r\}}\esp\left[\cos(\theta N_{\frac{2s}{\eps^2}})\sin(\theta N_{\frac{2r}{\eps^2}})\right]\mathrm{d}s\mathrm{d}r\\
    &\quad+\frac4{\eps^2}\int_0^\infty\int_0^\infty f(s)g(r)\ind_{\{r\leq s\}}\esp\left[\cos(\theta N_{\frac{2s}{\eps^2}})\sin(\theta N_{\frac{2r}{\eps^2}})\right]\mathrm{d}s\mathrm{d}r\\
    &=I_1^\eps+I_2^\eps.
\end{align*}
Applying the formula $2\sin a\cos
b=\sin(a+b)+\sin(a-b)=\sin(a+b)-\sin(b-a)$ we have
\begin{align*}
I_1^\eps&=\frac2{\eps^2}\int_0^\infty\int_0^\infty f(s)g(r)\ind_{\{s\leq r\}}\esp\left[\sin(\theta(N_{\frac{2s}{\eps^2}}+N_{\frac{2r}{\eps^2}}))+\sin(\theta(N_{\frac{2r}{\eps^2}}-N_{\frac{2s}{\eps^2}}))\right]\mathrm{d}s\mathrm{d}r\\
    &=I_{1,1}^\eps+I_{1,2}^\eps,
\end{align*}
\begin{align*}
I_2^\eps&=\frac2{\eps^2}\int_0^\infty\int_0^\infty f(s)g(r)\ind_{\{r\leq s\}}\esp\left[\sin(\theta(N_{\frac{2s}{\eps^2}}+N_{\frac{2r}{\eps^2}}))-\sin(\theta(N_{\frac{2s}{\eps^2}}-N_{\frac{2r}{\eps^2}}))\right]\mathrm{d}s\mathrm{d}r\\
    &=I_{2,1}^\eps-I_{2,2}^\eps.
\end{align*}
We proceed to show that $I_{1,1}^\eps$ and $I_{2,1}^\eps$ converges
to zero as $\eps$ tends to zero and that $I_{1,2}^\eps$ and
$I_{2,2}^\eps$ have the same (finite) limit, thus obtaining the
stated result. We note that
\begin{align*}
I_{1,1}^\eps&=\mathrm{Im}\left(\frac2{\eps^2}\int_0^\infty\int_0^\infty f(s)g(r)\ind_{\{s\leq r\}}\esp\left[e^{i\theta(N_{\frac{2r}{\eps^2}}-N_{\frac{2s}{\eps^2}})}e^{2i\theta N_{\frac{2s}{\eps^2}}}\right]\mathrm{d}s\mathrm{d}r\right)\\
    &=\mathrm{Im}(A^\eps).
\end{align*}
To find the limit of $I_{1,1}^\eps$ we see that $\lVert
A^\eps\rVert$ converges to zero as $\eps$ tends to zero.
\begin{align*}
\lVert A^\eps\rVert&\leq\frac2{\eps^2}\int_0^\infty\int_0^\infty\lvert f(s)g(r)\rvert\ind_{\{s\leq r\}}e^{-\frac2{\eps^2}(r-s)(1-\cos\theta)}e^{-\frac2{\eps^2}s(1-\cos2\theta)}\mathrm{d}s\mathrm{d}r\\
        &\leq\frac1{\eps^2}\int_0^\infty\int_0^\infty\left(f^2(s)+g^2(r)\right)\ind_{\{s\leq r\}}e^{-\frac{2r}{\eps^2}(1-\cos\theta)}e^{\frac{2s}{\eps^2}(\cos2\theta-\cos\theta)}\mathrm{d}s\mathrm{d}r\\
        &=\frac1{\eps^2}\int_0^\infty f^2(s)e^{\frac{2s}{\eps^2}(\cos2\theta-\cos\theta)}\int_s^\infty e^{-\frac{2r}{\eps^2}(1-\cos\theta)}\mathrm{d}r\mathrm{d}s\\
        &\quad+\frac1{\eps^2}\int_0^\infty g^2(r)e^{-\frac{2r}{\eps^2}(1-\cos\theta)}\int_0^r e^{\frac{2s}{\eps^2}(\cos2\theta-\cos\theta)}\mathrm{d}s\mathrm{d}r\\
        &=A_1^\eps+A_2^\eps.
\end{align*}
When $\cos\theta=\cos2\theta$ it is easy to check the convergence to
zero. Otherwise, we integrate obtaining
\[
A_1^\eps=\frac1{2(1-\cos\theta)}\int_0^\infty
f^2(s)e^{-\frac{2s}{\eps^2}(1-\cos2\theta)}\mathrm{d}s,
\]
\begin{align*}
A_2^\eps&=\frac1{2(\cos2\theta-\cos\theta)}\int_0^\infty g^2(r)e^{-\frac{2r}{\eps^2}(1-\cos\theta)}\left(e^{\frac{2r}{\eps^2}(\cos2\theta-\cos\theta)}-1\right)\mathrm{d}r\\
        &=\frac1{2(\cos2\theta-\cos\theta)}\int_0^\infty g^2(r)\left(e^{-\frac{2r}{\eps^2}(1-\cos2\theta)}-e^{-\frac{2r}{\eps^2}(1-\cos\theta)}\right)\mathrm{d}r
\end{align*}
which concludes, as the convergence to zero is easily seen by
dominated convergence. In the same manner we can see that
$I_{2,1}^\eps$ converges to zero.

With respect to the term $I_{1,2}^\eps$ we observe that
\[
I_{1,2}^\eps=\mathrm{Im}\left(\frac2{\eps^2}\int_0^\infty\int_0^\infty
f(s)g(r)\ind_{\{s\leq
r\}}e^{-\frac2{\eps^2}(r-s)(1-e^{i\theta})}\mathrm{d}s\mathrm{d}r\right).
\]
Since
$\frac2{\eps^2}(1-e^{i\theta})e^{-\frac2{\eps^2}(r-s)(1-e^{i\theta})}$
is an approximation of the identity, we have that $I_{1,2}^\eps$
converges, as $\eps$ tends to zero, to
$\mathrm{Im}\left(\frac1{1-e^{i\theta}}\int_0^\infty
f(s)g(s)\mathrm{d}s\right)<\infty.$ Clearly the same result is
obtained for $I_{2,2}^\eps$. This finishes the proof.
\end{proof}

\begin{remark}
We can use this result to approximate two independent processes of
many kinds, such as processes with a Gousart kernel (see for
instance \cite{Delgado-2000}), the Holmgren-Riemann-Liouville
fractional integral (\cite{Delgado-2000}), fractional Brownian
motion and sub-fractional Brownian motion. In the next section we
study weak convergence, in the space $\mathcal C([0,T])$, towards
these two last processes.
\end{remark}


\section{Weak approximation of fractional and sub-fractional Brownian motion}

In this section we apply the main theorem to prove weak convergence
results towards fractional Brownian motion and sub-fBm. Let us begin
with fBm and later on we will study the convergence for sub-fBm.

\bigskip
\subsection{Weak approximation of fractional Brownian motion} \noindent\bigskip

We are going to prove a result of weak convergence in
$\mathcal{C}([0,T])$ towards fBm, applying Theorem \ref{t:conv}. In
order to do so, we use the following representation of the fBm as
the integral of a deterministic kernel with respect to standard
Brownian motion (see for instance \cite{decreuse})
\begin{equation}\label{e:fBm-Bm}
B_t^H=\int_0^t\tilde{K}^H(t,s)\,\mbox{d}W_s,
\end{equation}
where $H\in(0,2)$, $\tilde{K}^H(t,s)$ is defined on the set
$\{0<s<t\}$ and is given by
\begin{equation}\label{e:nucli-fBm}
\tilde{K}^H(t,s)=d^H(t-s)^{\frac{H-1}{2}}+d^H\left(\frac{1-H}{2}\right)\int_s^t(u-s)^{\frac
{H-3}2}\left(1-\left(\frac{s}{u}\right)^{\frac{1-H}{2}}\right)\mbox{d}u,
\end{equation}
where the normalizing constant $d^H$ is
\[
d^H=\left(\frac{H\Gamma (\frac{3-H}{2})}{\Gamma(\frac {H+1}2)\Gamma
(2-H)}\right)^\frac{1}{2}.
\]
Since in this section the domain of fBm is restricted to the
interval $t\in[0,T]$, we can rewrite the integral representation as
\[
B_t^H=\int_0^t\tilde{K}^H(t,s)\,\mbox{d}W_s=\int_0^TK^H(t,s)\,\mbox{d}W_s,
\]
where $K^H(t,s)=\tilde{K}^H(t,s)\ind_{[0,t]}(s)$.

Applying this representation, since
$K^H(t,\cdot)\in\mathrm{L}^2(\R^+)$, the following result is a
corollary of Theorem \ref{t:conv}

\begin{corollary}\label{t:conv-frac}
Let $K^H(t,s)=\tilde{K}^H(t,s)\ind_{[0,t]}(s)$, where
$\tilde{K}^H(t,s)$ is defined by (\ref{e:nucli-fBm}), let
$\{N_s,s\geq0\}$ be a standard Poisson process and let
$\theta\in(0,\pi)\cup(\pi,2\pi)$. Then the processes
\begin{equation}\label{e:cos_frac}
B^H_{\eps}=\left\{\frac{2}{\eps}\int_0^TK^H(t,s)\cos\left(\theta
N_{\frac{2s}{\eps^2}}\right)\mbox{d}s,\quad t\in[0,T]\right\}
\end{equation}
and
\begin{equation}\label{e:sin_frac}
\tilde{B}^H_{\eps}=\left\{\frac{2}{\eps}\int_0^TK^H(t,s)\sin\left(\theta
N_{\frac{2s}{\eps^2}}\right)\mbox{d}s,\quad t\in[0,T]\right\}
\end{equation}
converge in law, in the sense of the finite dimensional
distributions, towards two independent fractional Brownian motions.
\end{corollary}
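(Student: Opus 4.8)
The plan is to recognize this corollary as a direct specialization of Theorem \ref{t:conv}, taking $f(t,\cdot)=g(t,\cdot)=K^H(t,\cdot)$, and then to identify the two limit processes as fractional Brownian motions by means of the representation \eqref{e:fBm-Bm}. Essentially all of the analytic work is already carried by Theorem \ref{t:conv}; what remains is to check its single nontrivial hypothesis and to read off the identity of the limits.

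First I would verify that $K^H(t,\cdot)\in\mathrm{L}^2(\R^+)$ for every $t\in[0,T]$, which is the only condition of Theorem \ref{t:conv} requiring attention. This is immediate from \eqref{e:fBm-Bm}: since $B^H_t=\int_0^t\tilde{K}^H(t,s)\,\mathrm{d}W_s$ is a well-defined Wiener integral and the covariance \eqref{e:frac_cov} gives $\mathrm{Var}(B^H_t)=t^H<\infty$, the Itô isometry yields $\int_0^t(\tilde{K}^H(t,s))^2\,\mathrm{d}s=t^H$. Because $K^H(t,s)=\tilde{K}^H(t,s)\ind_{[0,t]}(s)$, this gives $\lVert K^H(t,\cdot)\rVert_2^2=t^H<\infty$, so the kernel lies in $\mathrm{L}^2(\R^+)$ as required.

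Next I would observe that, with $f=g=K^H$, the approximating processes $B^H_\eps$ and $\tilde{B}^H_\eps$ of \eqref{e:cos_frac} and \eqref{e:sin_frac} are verbatim the processes $Y^f_\eps$ and $\tilde{Y}^g_\eps$ of Theorem \ref{t:conv}. That theorem then asserts that their finite dimensional distributions converge in law to those of the two \emph{independent} centered Gaussian processes $Y^{K^H}=\{\int_0^\infty K^H(t,s)\,\mathrm{d}W_s,\,t\in[0,T]\}$ and $\tilde{Y}^{K^H}=\{\int_0^\infty K^H(t,s)\,\mathrm{d}\tilde{W}_s,\,t\in[0,T]\}$.

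Finally I would identify these limits. Using \eqref{e:fBm-Bm} together with $K^H(t,s)=\tilde{K}^H(t,s)\ind_{[0,t]}(s)$, one has $\int_0^\infty K^H(t,s)\,\mathrm{d}W_s=\int_0^t\tilde{K}^H(t,s)\,\mathrm{d}W_s=B^H_t$, so $Y^{K^H}$ is a fractional Brownian motion; the identical computation with $\tilde{W}$ in place of $W$ shows $\tilde{Y}^{K^H}$ is also a fractional Brownian motion. Their independence is precisely the independence of $Y^{K^H}$ and $\tilde{Y}^{K^H}$ supplied by Theorem \ref{t:conv}, and the corollary follows. I do not anticipate any genuine obstacle: the statement is a routine application of Theorem \ref{t:conv}, the only point to record being the (trivial) $\mathrm{L}^2$ membership of $K^H(t,\cdot)$.
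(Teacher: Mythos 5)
Your proof is correct and follows exactly the route the paper intends: the paper itself simply remarks that $K^H(t,\cdot)\in\mathrm{L}^2(\R^+)$ and invokes Theorem \ref{t:conv} together with the representation (\ref{e:fBm-Bm}), which is precisely your argument, with the added (harmless) detail of verifying the $\mathrm{L}^2$ bound via the It\^o isometry. No issues.
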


We now proceed to prove the continuity and the tightness of the
families of processes defined by (\ref{e:cos_frac}) and
(\ref{e:sin_frac}), and consequently, proving the weak convergence
in the space $\mathcal C([0,T])$.

\begin{theorem}\label{t:conv-frac2}
Under the hypothesis of Corollary \ref{t:conv-frac}, if moreover one
of the following conditions is satisfied:
\begin{enumerate}
\item $H\in(\frac{1}{2},2)$,\label{i:cas1}
\item $H\in(0,\frac{1}{2}]$ and $\theta$ satisfies $\cos((2i+1)\theta)\neq1$ for all $i\in\N$ such that $i\leq\frac{1}{2}\left[\frac{1}{H}\right]$, \label{i:cas2}
\end{enumerate}
then the processes $B^H_\eps$ and $\tilde{B}^H_\eps$ converge in law
in $\mathcal{C}([0,T])$ towards two independent fractional Brownian
motions.
\end{theorem}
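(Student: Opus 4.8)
The plan is to combine the finite-dimensional convergence granted by Corollary~\ref{t:conv-frac} with tightness of the two families in $\mathcal{C}([0,T])$. Since fractional Brownian motion is continuous and Corollary~\ref{t:conv-frac} identifies the finite-dimensional limit of the pair $(B_\eps^H,\tilde B_\eps^H)$ as two \emph{independent} fBms, it suffices to prove that each family is tight; joint tightness then upgrades the finite-dimensional statement to weak convergence of the pair in $\mathcal{C}([0,T])\times\mathcal{C}([0,T])$. As $K^H(0,\cdot)\equiv0$ we have $B_\eps^H(0)=\tilde B_\eps^H(0)=0$, so by the Kolmogorov--Chentsov criterion it is enough to exhibit an integer $p\geq1$ and a constant $C$ independent of $\eps$ such that
\begin{equation*}
\esp\big[(B_\eps^H(t)-B_\eps^H(s))^{2p}\big]\leq C\,|t-s|^{pH},\qquad pH>1,
\end{equation*}
together with the same bound for $\tilde B_\eps^H$. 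Writing $h_{t,s}(\cdot)=K^H(t,\cdot)-K^H(s,\cdot)$, the increment equals $\frac2\eps\int_0^T h_{t,s}(u)\cos(\theta N_{2u/\eps^2})\,\mathrm{d}u$, and the It\^o isometry combined with (\ref{e:frac_cov}) gives $\int_0^T h_{t,s}^2(u)\,\mathrm{d}u=\esp[(B_t^H-B_s^H)^2]=|t-s|^H$; thus everything reduces to a moment estimate of the type (\ref{e:m2})--(\ref{e:m4}) for $f=h_{t,s}$.

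In case (\ref{i:cas1}) we have $2H>1$, and the fourth-moment bound (\ref{e:m4}) applied to $f=h_{t,s}$ directly yields $\esp[(B_\eps^H(t)-B_\eps^H(s))^4]\leq C\big(\int_0^T h_{t,s}^2\big)^2=C\,|t-s|^{2H}$; the same bound holds for $\tilde B_\eps^H$, whose increment is the imaginary part of $Z_\eps^{h}$, since $(Z_\eps^{h}\bar Z_\eps^{h})^2$ dominates the fourth power of that imaginary part. Taking $p=2$ closes this case with no restriction on $\theta$.

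In case (\ref{i:cas2}), where $H\leq\frac12$, I would take $p=[\frac1H]+1$, so that $pH>1$. Arguing as for (\ref{e:m4}), I bound $\esp[(B_\eps^H(t)-B_\eps^H(s))^{2p}]\leq\esp[(Z_\eps^{h}\bar Z_\eps^{h})^p]$ and expand $(Z_\eps^{h}\bar Z_\eps^{h})^p=(Z_\eps^{h})^p(\bar Z_\eps^{h})^p$ as a $2p$-fold integral of $h_{t,s}(s_1)\cdots h_{t,s}(s_{2p})$ against $\esp[\exp(i\theta\sum_{j=1}^{2p}\epsilon_j N_{2s_j/\eps^2})]$, where \emph{exactly} $p$ of the signs $\epsilon_j$ equal $+1$ and $p$ equal $-1$. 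After ordering $s_1\leq\dots\leq s_{2p}$ and rewriting the Poisson exponent through its independent increments, this expectation factorizes as $\prod_k e^{-\lambda_k(1-e^{i\theta\sigma_k})}$ with $\lambda_k=\frac{2}{\eps^2}(s_k-s_{k-1})$ and $\sigma_k=\sum_{j=k}^{2p}\epsilon_j$, so its modulus is $\prod_k e^{-\lambda_k(1-\cos(\theta\sigma_k))}$. The decisive feature, which makes the stated hypothesis on $\theta$ exactly the right one, is that the balanced number of signs forces $|\sigma_k|\leq p=[\frac1H]+1$ for every $k$. Keeping only the gaps of even index $k$, at which $\sigma_k$ is odd and therefore nonzero with $|\sigma_k|$ an odd integer $\leq[\frac1H]+1$, the condition $\cos((2i+1)\theta)\neq1$ for $i\leq\frac12[\frac1H]$ guarantees $1-\cos(\theta\sigma_k)\geq c>0$ on each of these $p$ gaps (the value $|\sigma_k|=1$ being harmless since $\theta\neq0$).

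It then remains to run the pairing estimate exactly as in (\ref{e:m4}): on each consecutive pair $(s_{2m-1},s_{2m})$ one uses $|h_{t,s}(s_{2m-1})h_{t,s}(s_{2m})|\leq\frac12(h_{t,s}^2(s_{2m-1})+h_{t,s}^2(s_{2m}))$, bounds the factors on the odd-index gaps by $1$, integrates each even-index partner variable over its decaying exponential (each such integration absorbing one factor $\eps^{-2}$ and contributing $O(1)$), and integrates the $p$ surviving variables freely to produce $\big(\int_0^T h_{t,s}^2\big)^p=|t-s|^{pH}$. This gives the required uniform bound, and the identical argument applies to $\tilde B_\eps^H$; since $pH>1$, tightness follows and the theorem is proved. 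I expect the genuinely delicate step to be the combinatorial bookkeeping of this $2p$-fold moment: tracking the $\binom{2p}{p}$ sign arrangements after ordering, verifying the uniform lower bound $1-\cos(\theta\sigma_k)\geq c$ over the finitely many admissible odd values of $\sigma_k$, and checking that discarding the nondecaying (even-$\sigma_k$, in particular $\sigma_k=0$) gaps still lets the pairing close with the clean power $\big(\int_0^T h_{t,s}^2\big)^p$.
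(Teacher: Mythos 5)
Your proposal is correct and follows essentially the same route as the paper: reduce tightness via Billingsley's criterion to uniform moment bounds for $f=K^H(t,\cdot)-K^H(s,\cdot)$ using $\int_0^T f^2 = (t-s)^H$, settle case (\ref{i:cas1}) with the fourth-moment bound (\ref{e:m4}), and handle case (\ref{i:cas2}) with a higher even moment whose estimate brings in the factors $1-\cos((2i+1)\theta)$ for $i\leq\frac12\left[\frac1H\right]$. In fact your combinatorial analysis of the sign partial sums $\sigma_k$ (odd and bounded by $p=\left[\frac1H\right]+1$ at the even-index gaps) makes explicit exactly the bookkeeping that the paper only asserts in one sentence for case (\ref{i:cas2}); the only small point you leave implicit, and the paper states, is the pathwise continuity of $B^H_\eps$ and $\tilde B^H_\eps$ themselves.
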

\begin{proof}
We first observe that the processes $B^H_{\eps}$ and
$\tilde{B}^H_{\eps}$ are continuous. In fact, $B^H_{\eps}$ and
$\tilde{B}^H_{\eps}$ are continuous for all $H\in(0,2)$ and
absolutely continuous if $H\in(1,2)$, since it can be proved that
(see Lemma 2.1 in \cite{Bardina-2005})
\[
\lvert B^H_{\eps}(t)- B^H_{\eps}(s)\rvert\leq
C_H(t-s)^{\left(\frac{H+1}2\right)\wedge1}
\]
and
\[
\lvert \tilde{B}^H_{\eps}(t)- \tilde{B}^H_{\eps}(s)\rvert\leq
C_H(t-s)^{\left(\frac{H+1}2\right)\wedge1}.
\]
It only remains to prove the tightness of the families of processes
defined by (\ref{e:cos_frac}) and (\ref{e:sin_frac}). Since
$B_{\varepsilon}^H(0)=0$, using Billingsley's criterion (see for
instance \cite{Billingsley-1968}) it is enough to check that for
some $m>0$ and $\alpha>1$
$$\esp[|B_{\varepsilon}^H(t)-B_{\varepsilon}^H(s)|^m]\leq C(F(t)-F(s))^{\alpha},$$
where $F$ is a nondecreasing continuous function.

On the other hand, it is known that
\[
\int_0^T\left(K^H(t,r)-K^H(s,r)\right)^2\mathrm{d}r=\esp\left[(B^H_t-B^H_s)^2\right]=(t-s)^H,
\]
and then it is sufficient to show that
\begin{equation}\label{e:m}
\esp\left[(y_{\eps}^f)^m\right]\leq
C_m\left(\int_0^Tf^2(r)\,\mbox{d}r\right)^{\frac{m}{2}}\!,\quad
\esp\left[(\tilde{y}_{\eps}^f)^m\right]\leq
C_m\left(\int_0^Tf^2(r)\,\mbox{d}r\right)^{\frac{m}{2}}
\end{equation}
holds for some $m$ satisfying the condition $Hm/2>1$, where
$f(r):=K^H(t,r)-K^H(s,r)$,
$y_{\eps}^f=\frac{2}{\eps}\int_0^Tf(r)\cos(\theta
N_{\frac{2r}{\eps^2}})\mbox{d}r$ and
$\tilde{y}_{\eps}^f=\frac{2}{\eps}\int_0^Tf(r)\sin(\theta
N_{\frac{2r}{\eps^2}})\mbox{d}r$.

Then, in the case (\ref{i:cas1}), it is sufficient to prove
(\ref{e:m}) for $m=4$, which can be seen proving that
$\esp[(z_\eps^f\bar{z}_\eps^f)^2]\leq C\lVert f\rVert_2^4$, where
$\lVert\cdot\rVert_2$ is the $L^2[0,T]$ norm and
$z_\eps^f=y_\eps^f+i\tilde{y}_\eps^f$. If we extend $f$ to $\R^+$
for zeros, i.e., if we consider $F(r):=f(r)\ind_{[0,T]}(r)$, we have
proved in Theorem \ref{t:conv} that
\[
\esp[(Z_\eps^F\bar{Z}_\eps^F)^2]\leq3\left(\frac{4}{1-\cos\theta}\int_0^\infty
F^2(s)\mbox{d}s\right)^2.
\]
Then,
\begin{multline*}
\esp[(z_\eps^f\bar{z}_\eps^f)^2]=\esp[(Z_\eps^F\bar{Z}_\eps^F)^2]\\
\leq3\left(\frac{4}{1-\cos\theta}\int_0^\infty
F^2(s)\mbox{d}s\right)^2=3\left(\frac{4}{1-\cos\theta}\int_0^T
f^2(s)\mbox{d}s\right)^2.
\end{multline*}

To prove the result under the hypothesis (\ref{i:cas2}) we can show
that (\ref{e:m}) is satisfied for some even $m$ such that
$\frac{Hm}{2}>1$. If we proceed in the same way as in case
(\ref{i:cas1}) we obtain an expression that depends on
$1-\cos((2i+1)\theta)$ for all
$i=0,1,\dotsc,\left[\frac{1}{2H}\right]$ and the constant $C_m$
depends on
$\max_{i=0,1,\dotsc,\left[\frac{1}{2H}\right]}\frac{1}{1-\cos((2i+1)\theta)}$.
\end{proof}

\bigskip
\subsection{Convergence towards sub-fractional Brownian motion}
\noindent\bigskip

In order to obtain the convergence to sub-fractional Brownian
motion, we will apply a decomposition result due to Ruiz de Ch\'avez
and Tudor in \cite{Tudor3}. In this paper, they use the process
$X^H$ introduced by Lei and Nualart in \cite{Lei-2008} and defined
in (\ref{e:david}) by the equation
$$
X_t^H=\int_0^{\infty}(1-e^{-r t})r^{-\frac{1+H}{2}}\,\mathrm{d}W_r,
$$
where $W$ is a standard Brownian motion. It can be proved (see
\cite{Lei-2008} or \cite{Tudor3}) that its covariance function is
\begin{equation}\label{e:david_cov}
\mathrm{Cov}(X_t^H,X_s^H)=\left\{
\begin{array}{ll}
\tfrac{\Gamma(1-H)}{H}\left[t^H+s^H-(t+s)^H\right] & \mbox{ if $H\in (0,1)$,} \\
\tfrac{\Gamma(2-H)}{H(H-1)}\left[(t+s)^H-t^H-s^H\right] & \mbox{ if
$H\in (1,2)$},
\end{array}
\right.
\end{equation}
and that $X^H$ has a version with absolutely continuous trajectories
on $[0,\infty)$.

The decomposition result can be stated and proved as follows:

\begin{theorem}[Decomposition of sub-fBm]\label{t:descomposicio}
Let $B^H$ be a fBm, $S^H$ a sub-fBm and $W=\{W_t,t\geq0\}$ a
standard Brownian motion. Let $X^H$ be the process given by
(\ref{e:david}). If for $H\in(0,1)$ we suppose that $B^H$ and $W$
are independents, then the processes
$\{Y_t^H=C_1X_t^H+B_t^H,t\geq0\}$ and $\{S_t^H,t\geq0\}$ have the
same law, where $C_1=\sqrt{\frac{H}{2\Gamma(1-H)}}$. If for
$H\in(1,2)$ we suppose that $S^H$ and $W$ are independents, then the
processes $\{Y_t^H=C_2X_t^H+S_t^H,t\geq0\}$ and $\{B_t^H,t\geq0\}$
have the same law, where $C_2=\sqrt{\frac{H(H-1)}{2\Gamma(2-H)}}$.
\end{theorem}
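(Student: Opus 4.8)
The plan is to reduce the equality in law to an equality of covariance functions, exploiting that every process in sight is centered Gaussian. First I would record that $X^H$ is a Wiener integral of a deterministic kernel against $W$, hence a centered Gaussian process, and that $B^H$ and $S^H$ are centered Gaussian by definition. In the case $H\in(0,1)$, the hypothesis that $B^H$ is independent of $W$ makes $B^H$ independent of $X^H$, so the pair $(X^H,B^H)$ is jointly Gaussian and the linear combination $Y^H=C_1X^H+B^H$ is again a centered Gaussian process. Since the law of a centered Gaussian process is determined by its covariance, it therefore suffices to check that $\mathrm{Cov}(Y_t^H,Y_s^H)=\mathrm{Cov}(S_t^H,S_s^H)$ for all $s,t\geq0$; the same reduction applies verbatim in the case $H\in(1,2)$ with the roles of $S^H$ and $B^H$ exchanged.

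Next, for $H\in(0,1)$, I would expand by bilinearity. Independence of $X^H$ and $B^H$ kills the cross terms, leaving
\[
\mathrm{Cov}(Y_t^H,Y_s^H)=C_1^2\,\mathrm{Cov}(X_t^H,X_s^H)+\mathrm{Cov}(B_t^H,B_s^H).
\]
Substituting the covariance of $X^H$ from (\ref{e:david_cov}) and using $C_1^2=\frac{H}{2\Gamma(1-H)}$, the Gamma prefactor cancels and the first term collapses to $\frac12\bigl(t^H+s^H-(t+s)^H\bigr)$. Adding the fBm covariance (\ref{e:frac_cov}), namely $\frac12\bigl(t^H+s^H-|t-s|^H\bigr)$, the $t^H$ and $s^H$ contributions combine and one reads off exactly $t^H+s^H-\frac12\bigl[(t+s)^H+|t-s|^H\bigr]$, which is the sub-fBm covariance (\ref{e:subfrac_cov}). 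This proves that $Y^H$ and $S^H$ share a covariance, hence a law.

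For $H\in(1,2)$ the computation is entirely parallel: now $\mathrm{Cov}(Y_t^H,Y_s^H)=C_2^2\,\mathrm{Cov}(X_t^H,X_s^H)+\mathrm{Cov}(S_t^H,S_s^H)$, the choice $C_2^2=\frac{H(H-1)}{2\Gamma(2-H)}$ cancels the prefactor in the second line of (\ref{e:david_cov}) to give $\frac12\bigl((t+s)^H-t^H-s^H\bigr)$, and adding the sub-fBm covariance (\ref{e:subfrac_cov}) makes the $(t+s)^H$ terms cancel, leaving $\frac12\bigl(t^H+s^H-|t-s|^H\bigr)$, i.e.\ the fBm covariance (\ref{e:frac_cov}).

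There is no serious analytic obstacle here: the entire content is the observation that the normalizing constants $C_1$ and $C_2$ are tuned precisely to cancel the Gamma-function prefactors of $\mathrm{Cov}(X^H_t,X^H_s)$, so that the $X^H$ contribution rescales to exactly the combination $\frac12[\cdots]$ needed to pass between the fBm and sub-fBm covariances. The one point that genuinely requires the independence hypothesis — and that I would be careful to state explicitly — is the reduction in the first paragraph: matching covariances is sufficient only because independence of $X^H$ from $B^H$ (resp. $S^H$) guarantees that $Y^H$ is again a centered Gaussian process, so that its law is pinned down by its covariance alone.
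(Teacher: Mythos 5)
Your proposal is correct and follows essentially the same route as the paper: both reduce the claim to an identity of covariance functions for centered Gaussian processes, use independence to kill the cross terms, and observe that $C_1^2$ and $C_2^2$ are chosen to cancel the Gamma prefactors in (\ref{e:david_cov}) so that the covariances of (\ref{e:frac_cov}) and (\ref{e:subfrac_cov}) match. The only difference is that you spell out more explicitly why matching covariances suffices, which the paper leaves implicit.
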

\begin{proof}
It is clear that the process $Y^H$ is centered and Gaussian in both
cases. For $H\in(0,1)$, from (\ref{e:frac_cov}), (\ref{e:david_cov})
and using the independence of $X^H$ and $B^H$ we have
\begin{align*}
\mbox{Cov}(Y_t^H,Y_s^H)&=C_1^2\mbox{Cov}[X_t^H,X_s^H]+\mbox{Cov}[B _t^H,B _s^H]\\
                       &=s^{H}+t^{H}-\frac{1}{2}\left[(s+t)^{H}+|s-t|^{H}\right],
\end{align*}
which completes the proof in this case, and for $H\in(1,2)$, from
(\ref{e:subfrac_cov}), (\ref{e:david_cov}) and using the
independence of $X^H$ and $S^H$ we have
\begin{align*}
\mbox{Cov}(Y_t^H,Y_s^H)&=C_2^2\mbox{Cov}[X_t^H,X_s^H]+\mbox{Cov}[S _t^H,S _s^H] \\
                       &=\frac{1}{2}\left(s^{H}+t^{H}-|s-t|^{H}\right),
\end{align*}
which completes the proof.
\end{proof}

In order to apply the main theorem to prove weak convergence to
sub-fBm, we have to prove weak convergence to fBm and the process
$X^H$ introduced by Lei and Nualart. Then, applying the
decomposition theorem and the independence of the limit laws, we can
state the weak convergence to sub-fBm for $H\in(0,1)$.

So, it just remains to prove for the process $X^H$ defined by
(\ref{e:david}) the same results we have obtained for fBm.

\begin{corollary}\label{t:conv-david}
Let $X^H$be the process defined by (\ref{e:david}), let
$\{N_s,s\geq0\}$ be a standard Poisson process and let
$\theta\in(0,\pi)\cup(\pi,2\pi)$. Then the processes
\begin{equation}\label{e:cos_david}
X^H_{\eps}=\left\{\frac{2}{\eps}\int_0^\infty(1-e^{-st})s^{-\frac{1+H}{2}}\cos\left(\theta
N_{\frac{2s}{\eps^2}}\right)\mbox{d}s,\quad t\in[0,T]\right\}
\end{equation}
and
\begin{equation}\label{e:sin_david}
\tilde{X}^H_{\eps}=\left\{\frac{2}{\eps}\int_0^\infty(1-e^{-st})s^{-\frac{1+H}{2}}\sin\left(\theta
N_{\frac{2s}{\eps^2}}\right)\mbox{d}s,\quad t\in[0,T]\right\}
\end{equation}
converge in law, in the sense of the finite dimensional
distributions, towards two independent processes with the same law
that $X^H$.
\end{corollary}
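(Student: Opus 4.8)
The plan is to realize this statement as a direct corollary of Theorem \ref{t:conv}, exactly in the spirit of Corollary \ref{t:conv-frac} for fractional Brownian motion. First I would take the common kernel $f(t,s)=g(t,s)=(1-e^{-st})s^{-\frac{1+H}{2}}$ and observe that, for this choice, the processes $X^H_{\eps}$ and $\tilde{X}^H_{\eps}$ defined in (\ref{e:cos_david}) and (\ref{e:sin_david}) are literally the processes $Y^f_{\eps}$ and $\tilde{Y}^g_{\eps}$ of (\ref{e:cos_f}) and (\ref{e:sin_f}). Likewise, by the defining representation (\ref{e:david}), the limit $X^H$ is exactly the process $Y^f=\{\int_0^\infty f(t,s)\,\mathrm{d}W_s,\ t\in[0,T]\}$, while $\tilde{Y}^g$ is the independent copy driven by $\tilde{W}$. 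Thus the conclusion of the corollary is nothing but the conclusion of Theorem \ref{t:conv} for this particular kernel, and the entire task reduces to verifying the single hypothesis of that theorem.

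The only nonautomatic point is that $f(t,\cdot)\in L^2(\R^+)$ for every fixed $t\in[0,T]$, and this is the one step requiring an argument. I would estimate $\int_0^\infty (1-e^{-st})^2 s^{-(1+H)}\,\mathrm{d}s$ by splitting the integration near $s=0$ and near $s=\infty$. Near the origin one uses $1-e^{-st}\sim st$, so the integrand behaves like $t^2 s^{1-H}$, which is integrable on $(0,1]$ precisely because $1-H>-1$, i.e.\ $H<2$. Near infinity the factor $(1-e^{-st})^2$ is bounded and the integrand is of order $s^{-(1+H)}$, integrable on $[1,\infty)$ precisely because $1+H>1$, i.e.\ $H>0$. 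Hence the kernel lies in $L^2(\R^+)$ for every $H\in(0,2)$ and every $t\in[0,T]$, the case $t=0$ being trivial since the kernel then vanishes identically.

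With the $L^2$ membership in hand, Theorem \ref{t:conv} applies verbatim and gives that the finite-dimensional distributions of $\{X^H_{\eps}\}$ and $\{\tilde{X}^H_{\eps}\}$ converge in law to those of $Y^f$ and $\tilde{Y}^g$, that is, to two independent copies of $X^H$, which is exactly the assertion. I do not expect a genuine obstacle in this corollary: since everything substantive is already carried by Theorem \ref{t:conv}, the only care needed is in the integrability estimate at the two endpoints, where the constraints $H<2$ (from $s\to 0$) and $H>0$ (from $s\to\infty$) are precisely what the standing assumption $H\in(0,2)$ supplies.
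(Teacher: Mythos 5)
Your proposal is correct and matches the paper's (implicit) argument: the paper states this result as an immediate corollary of Theorem \ref{t:conv}, the only hypothesis to check being that the kernel $(1-e^{-st})s^{-\frac{1+H}{2}}$ belongs to $L^2(\R^+)$ for each fixed $t$, which your endpoint analysis at $s\to0$ and $s\to\infty$ verifies exactly as needed for $H\in(0,2)$.
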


\begin{theorem}\label{t:conv-david2}
Under the hypothesis of Corollary \ref{t:conv-david} the processes
defined by (\ref{e:cos_david}) and (\ref{e:sin_david}) converge in
law, in $\mathcal{C}([0,T])$, towards two independent processes with
the same law that the process defined by (\ref{e:david}).
\end{theorem}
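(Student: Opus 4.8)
The plan is to follow exactly the strategy of Theorem \ref{t:conv-frac2}: the convergence of the finite dimensional distributions together with the independence of the two limits is already furnished by Corollary \ref{t:conv-david}, so the only thing left is to establish that the families $\{X^H_\eps\}$ and $\{\tilde X^H_\eps\}$ have continuous trajectories and are tight in $\mathcal C([0,T])$. As in the fBm case I would reduce both issues to a single moment estimate for the increments. Fixing $0\le t_1\le t_2\le T$ and setting $f(r):=(e^{-rt_1}-e^{-rt_2})r^{-(1+H)/2}=(1-e^{-rt_2})r^{-(1+H)/2}-(1-e^{-rt_1})r^{-(1+H)/2}$, one has $X^H_\eps(t_2)-X^H_\eps(t_1)=\frac2\eps\int_0^\infty f(r)\cos(\theta N_{2r/\eps^2})\,\mathrm dr=y_\eps^f$ and, likewise, $\tilde X^H_\eps(t_2)-\tilde X^H_\eps(t_1)=\tilde y_\eps^f$, in the notation of Theorem \ref{t:conv-frac2}. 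Moreover $\lVert f\rVert_2^2=\int_0^\infty f^2(r)\,\mathrm dr=\esp[(X^H_{t_2}-X^H_{t_1})^2]$, so the whole argument hinges on controlling this increment variance.

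The decisive step is the sharp bound that, for $H\in(0,1)$ and $0\le t_1\le t_2$,
\[
\esp[(X^H_{t_2}-X^H_{t_1})^2]\le C\left(t_2^H-t_1^H\right).
\]
I would obtain it from the covariance (\ref{e:david_cov}): a direct computation gives $\esp[(X^H_{t_2}-X^H_{t_1})^2]=\frac{\Gamma(1-H)}{H}\big(2(t_1+t_2)^H-2^H(t_1^H+t_2^H)\big)$, and after normalising $t_1=x\,t_2$ with $x\in[0,1]$ the claim reduces to the boundedness on $[0,1]$ of $x\mapsto\big(2(1+x)^H-2^H(1+x^H)\big)/(1-x^H)$, which is elementary (the quotient is continuous on $[0,1)$ and tends to $0$ as $x\to1$, numerator and denominator both vanishing there). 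It is worth stressing that this estimate is genuinely better than its fBm counterpart: for fBm the increment variance equals $(t_2-t_1)^H$, which is \emph{not} dominated by $t_2^H-t_1^H$, and this is precisely why Theorem \ref{t:conv-frac2} required either $H>\tfrac12$ or an extra arithmetic condition on $\theta$, whereas here no such restriction appears. The absolute continuity of the trajectories of $X^H$ is what makes its increments this small.

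With this estimate in hand I would conclude as follows. The fourth moment estimate already proved inside Theorem \ref{t:conv}, namely $\esp[(y_\eps^f)^4]\le\esp[(Z^f_\eps\bar Z^f_\eps)^2]\le 3\big(\tfrac{4}{1-\cos\theta}\big)^2\lVert f\rVert_2^4$ with a constant independent of $\eps$ (and the identical bound for $\tilde y_\eps^f$, valid for every $\theta\in(0,\pi)\cup(\pi,2\pi)$ since $1-\cos\theta>0$), combines with the variance bound to yield
\[
\esp\big[|X^H_\eps(t_2)-X^H_\eps(t_1)|^4\big]\le C\big(t_2^H-t_1^H\big)^2=C\big(F(t_2)-F(t_1)\big)^2,
\]
where $F(t):=t^H$, and the same inequality holds for $\tilde X^H_\eps$. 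Since $F$ is continuous and nondecreasing and the exponent $\alpha=2>1$, Billingsley's criterion gives tightness of both families; the very same inequality, read through the time change $u=t^H$ (a homeomorphism of $[0,T]$) and the ordinary Kolmogorov continuity theorem, provides the required continuous versions of $X^H_\eps$ and $\tilde X^H_\eps$. Combining tightness and continuity with the convergence of the finite dimensional distributions and the independence of the limits from Corollary \ref{t:conv-david} yields weak convergence in $\mathcal C([0,T])$ towards two independent copies of $X^H$.

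The main obstacle is exactly the sharp variance bound of the second paragraph together with the realisation that $F(t)=t^H$, rather than $F(t)=t$, is the correct gauge in Billingsley's criterion; once that is seen, the classical $m=4$ estimate of Theorem \ref{t:conv} suffices uniformly in $\eps$ and for all admissible $\theta$, and the remainder is routine. For $H\in(1,2)$ the trajectories of $X^H$ are even more regular, so the same scheme, now using the second line of (\ref{e:david_cov}), applies a fortiori.
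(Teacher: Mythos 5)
Your proposal is correct and follows essentially the same route as the paper: reduce tightness to the fourth-moment bound (\ref{e:m4}) of Theorem \ref{t:conv} applied to the kernel increment $\Phi^H(t,\cdot)-\Phi^H(s,\cdot)$, identify $\lVert\Phi^H(t,\cdot)-\Phi^H(s,\cdot)\rVert_2^2$ with $\esp[(X^H_t-X^H_s)^2]$ via the covariance (\ref{e:david_cov}), bound that quantity by $C\,(t^H-s^H)$ using $s+t<2t$ (resp.\ $s+t>2s$ for $H\in(1,2)$), and apply Billingsley's criterion with $F(x)=x^H$ and $\alpha=2$; your observation that no restriction on $\theta$ or $H$ is needed here, unlike in Theorem \ref{t:conv-frac2}, matches the paper exactly. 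The one place you genuinely diverge is the continuity step: the paper does not invoke Kolmogorov's theorem but instead exhibits $X^H_\eps(t)=\int_0^t Y_r\,\mathrm{d}r$ with $Y_r=\frac{2}{\eps}\int_0^\infty s^{\frac{1-H}{2}}e^{-sr}\cos(\theta N_{\frac{2s}{\eps^2}})\,\mathrm{d}s$ and Fubini, which shows the trajectories are in fact \emph{absolutely} continuous and, more importantly, gives a rigorous pathwise meaning to the defining integral --- note that for $H\le1$ the kernel $(1-e^{-st})s^{-\frac{1+H}{2}}$ is in $L^2(\R^+)$ but not in $L^1(\R^+)$, so (\ref{e:cos_david}) is not an absolutely convergent integral and some such device is needed. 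Passing to a Kolmogorov continuous modification, as you do, suffices for a statement about convergence in law, but it leaves this integrability point implicit; the paper's $Y_r$ construction is the cleaner way to settle it.
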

\begin{proof}
We first need to show that the processes $X^H_{\eps}$ and
$\tilde{X}^H_{\eps}$ are continuous. In fact, they are absolutely
continuous. Let us consider for all $r>0$ the process
\[
Y_r=\frac2\eps\int_0^\infty s^{\frac{1-H}2}e^{-sr}\cos\left(\theta
N_{\frac{2s}{\eps^2}}\right)\mbox{d}s.
\]
This integral exists because, using inequality (\ref{e:m2}), we have
\[
\esp[Y_r^2]\leq C\left(\int_0^\infty s^{1-H}e^{-2sr}\mathrm
ds\right)=Cr^{H-2}\Gamma(2-H).
\]
On the other hand,
\[
\esp\left[\int_0^t\lvert
Y_r\rvert\mathrm{d}r\right]\leq\int_0^t(\esp[Y_r^2])^{\frac12}\mathrm{d}r\leq
C\int_0^tr^{\frac{H-2}2}\mathrm{d}r<\infty
\]
since $H\in(0,2)$.

Let us now observe that $X^H_\eps=\int_0^t Y_r\mathrm{d}r$. Indeed,
applying Fubini's theorem,
\begin{align*}
\int_0^tY_r\mathrm{d}r&=\frac2\eps\int_0^\infty s^{\frac{1-H}2}\left(\int_0^t e^{-sr}\mathrm{d}r\right)\cos\left(\theta N_{\frac{2s}{\eps^2}}\right)\mbox{d}s\\
        &=\frac2\eps\int_0^\infty s^{-{\frac{1+H}2}}(1-e^{-st})\cos\left(\theta N_{\frac{2s}{\eps^2}}\right)\mbox{d}s\\
        &=X^H_\eps.
\end{align*}
The same proof shows that the process $\tilde X^H_\eps$ is
continuous.

Next, we prove the convergence only for (\ref{e:cos_david}). For
(\ref{e:sin_david}) the result is proved similarly.

It suffices to prove the tightness of the family
$\{X_\eps^H\}_\eps$. Since $X_\eps^H(0)=0$, using Billingsley's
criterion we only need to prove that
\[
\esp\left[\lvert X_\eps^H(t)-X_\eps^H(s)\rvert^4\right]\leq\lvert
F(t)-F(s)\rvert^2
\]
where $F$ is a continuous, non-decreasing function. We observe that
\[
\esp\left[\lvert
X_\eps^H(t)-X_\eps^H(s)\rvert^4\right]=\esp\left[\frac{2}{\eps}\int_0^\infty\left(\Phi^H(t,r)-\Phi^H(s,r)\right)\cos(\theta
N_{\frac{2r}{\eps^2}})\mathrm{d}r\right]^4
\]
where
$\Phi^H(t,r)=(1-e^{-rt})r^{-\frac{1+H}{2}}\in\mathrm{L}^2(\R^+)$.

Since $\Phi^H\in\mathrm{L}^2(\R^+)$, applying the bound
(\ref{e:m4}), which is proved in Theorem \ref{t:conv}, we obtain

\begin{align*}
\esp\left[\lvert X_\eps^H(t)-X_\eps^H(s)\rvert^4\right]&\leq C\left(\int_0^\infty\left(\Phi^H(t,r)-\Phi^H(s,r)\right)^2\mathrm{d}r\right)^2\\
        &=C\bigg(\int_0^\infty\Big((1-e^{-rt})^2r^{-(1+H)}+(1-e^{-rs})^2r^{-(1+H)}\\
        &\quad-2(1-e^{-rt})(1-e^{-rs})r^{-(1+H)}\Big)\mathrm{d}r\bigg)^2.
\end{align*}
Using (\ref{e:david_cov}) and assuming $s<t$ we obtain for
$H\in(0,1)$

\begin{align*}
\esp\left[\lvert X_\eps^H(t)-X_\eps^H(s)\rvert^4\right]&\leq C\left(2(t+s)^H-(2t)^H-(2s)^H\right)^2\\
        &\leq C\left((2t)^H-(2s)^H\right)^2,
\end{align*}
since $s+t<2t$. In the same way, if $H\in(1,2)$,
\begin{align*}
\esp\left[\lvert X_\eps^H(t)-X_\eps^H(s)\rvert^4\right]&\leq C\left((2t)^H+(2s)^H-2(t+s)^H\right)^2\\
        &\leq C\left((2t)^H-(2s)^H\right)^2,
\end{align*}
since $s+t>2s$. In both cases we have proved the result with
$F(x)=(2x)^H$.
\end{proof}


Finally, we obtain the result of weak convergence to sub-fractional
Brownian motion, as a direct conclusion of the previous results.

\begin{theorem}\label{t:conv-sub}
Let $H\in(0,1)$, let $\{X_\eps^H(t),t\in[0,T]\}$ be the family of
processes defined by (\ref{e:cos_david}), let
$\{\tilde{B}^H_\eps(t),t\in[0,T]\}$ be the family of processes
defined by (\ref{e:sin_frac}) and
$C_1=\sqrt{\frac{H}{2\Gamma(1-H)}}$. Let us assume
$\theta\in(0,\pi)\cup(\pi,2\pi)$ and, for $H\in(0,\frac{1}{2}]$,
that $\theta$ is such that $\cos((2i+1)\theta)\neq1$ for all
$i\in\N$ such that $i\leq\frac{1}{2}\left[\frac{1}{H}\right]$. Then,
$\{Y^H_\eps(t)=C_1X^H_\eps(t)+B^H_\eps(t), t\in[0,T]\}$ weakly
converges in $\mathcal C([0,T])$ to a sub-fractional Brownian
motion.
\end{theorem}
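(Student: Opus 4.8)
The plan is to assemble three ingredients that are already in hand: the joint convergence of finite-dimensional distributions with \emph{independent} limits provided by Theorem~\ref{t:conv}, the pathwise identification of the limit law furnished by the decomposition Theorem~\ref{t:descomposicio}, and the two separate tightness results of Theorems~\ref{t:conv-david2} and~\ref{t:conv-frac2}. The structural point is that $X^H$ is approximated through the \emph{cosine} family (\ref{e:cos_david}) while the fractional Brownian motion is approximated through the \emph{sine} family (\ref{e:sin_frac}), both driven by the \emph{same} Poisson process $\{N_s\}$; this is exactly the configuration for which Theorem~\ref{t:conv} produces a pair of \emph{independent} Gaussian limits. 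Accordingly the process under consideration is $Y^H_\eps=C_1X^H_\eps+\tilde B^H_\eps$, built from the cosine approximation of $X^H$ and the sine approximation of fBm.

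First I would establish convergence of the finite-dimensional distributions. Applying Theorem~\ref{t:conv} with the cosine kernel $f(t,s)=(1-e^{-st})s^{-(1+H)/2}$ and the sine kernel $g(t,s)=K^H(t,s)$, both of which lie in $\mathrm L^2(\R^+)$ (as used in Corollaries~\ref{t:conv-david} and~\ref{t:conv-frac}), the finite-dimensional distributions of the pair $(X^H_\eps,\tilde B^H_\eps)$ converge in law to those of $(X^H,B^H)$, where $X^H$ is the process (\ref{e:david}), $B^H$ is a fractional Brownian motion, and, crucially, $X^H$ and $B^H$ are \emph{independent}. By the continuous mapping theorem applied to $(x,y)\mapsto C_1x+y$, the finite-dimensional distributions of $Y^H_\eps$ then converge to those of $C_1X^H+B^H$. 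Since $X^H$ and $B^H$ are independent, Theorem~\ref{t:descomposicio} in the case $H\in(0,1)$ identifies $C_1X^H+B^H$ as a process having the law of a sub-fractional Brownian motion $S^H$. This gives convergence of the finite-dimensional distributions of $Y^H_\eps$ towards those of sub-fBm.

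It remains to prove tightness of $\{Y^H_\eps\}_\eps$ in $\mathcal C([0,T])$. The family $\{X^H_\eps\}_\eps$ is tight by Theorem~\ref{t:conv-david2}, which holds for every $H\in(0,2)$, and $\{\tilde B^H_\eps\}_\eps$ is tight by Theorem~\ref{t:conv-frac2}; the hypotheses on $\theta$ in the present statement are imposed precisely so that case~(\ref{i:cas2}) of that theorem applies when $H\in(0,\tfrac12]$, while case~(\ref{i:cas1}) covers $H\in(\tfrac12,1)$. Tightness of each of the two marginal families yields tightness of the joint laws of $(X^H_\eps,\tilde B^H_\eps)$ in $\mathcal C([0,T])^2$, since a product of the associated compact sets is compact; pushing forward through the continuous map $(x,y)\mapsto C_1x+y$ then shows that $\{Y^H_\eps\}_\eps$ is tight in $\mathcal C([0,T])$.

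Finally, tightness together with the convergence of the finite-dimensional distributions gives weak convergence of $Y^H_\eps$ in $\mathcal C([0,T])$, and the limit, already identified above, is a sub-fractional Brownian motion. The only genuinely delicate point is the independence of the two limiting components: it is not a consequence of the marginal convergences alone, but arises from pairing the cosine and sine families from a \emph{single} Poisson process in Theorem~\ref{t:conv}. Everything else is a routine application of the continuous mapping theorem and of the stability of tightness under finite sums.
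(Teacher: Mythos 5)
Your proof is correct and follows essentially the same route as the paper's: Theorem \ref{t:conv} for independence of the limit laws, Theorems \ref{t:conv-frac2} and \ref{t:conv-david2} for the two marginal families, and Theorem \ref{t:descomposicio} to identify the limit as sub-fBm. You are in fact more careful than the paper's three-line proof, which passes from the two marginal weak convergences directly to convergence of the sum; your explicit step of combining joint finite-dimensional convergence with joint tightness of the pair (via a product of compact sets) and then applying the continuous mapping theorem to $(x,y)\mapsto C_1x+y$ is precisely what is needed to make that passage rigorous.
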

\begin{proof}
Applying Theorems \textbf{\ref{t:conv-frac2}} and
\textbf{\ref{t:conv-david2}} we know that, respectively, the
processes  $\tilde{B}^H_\eps$ and $X_\eps^H$ converge in law in
$\mathcal C([0,T])$ towards a fBm and the process defined by
(\ref{e:david}). Moreover, applying Theorem \textbf{\ref{t:conv}},
we know that the limit laws are independent. Hence, we are under the
hypothesis of Theorem \textbf{\ref{t:descomposicio}}, which proves
the stated result.
\end{proof}

\begin{remark}
Obviously we can also obtain the same result using the families of
processes defined by (\ref{e:sin_david}) and (\ref{e:cos_frac}).
\end{remark}

\bibliographystyle{alpha}
\bibliography{WeakConvtoGP}
\end{document}